\DeclareMathOperator{\Aut}{Aut}  
\DeclareMathOperator{\cl}{cl}   
\newcommand{\abar}{{\ensuremath{\bar{a}}}}
\newcommand{\bbar}{{\ensuremath{\bar{b}}}}
\newcommand{\cbar}{{\ensuremath{\bar{c}}}}
\newcommand{\ybar}{{\ensuremath{\bar{y}}}}
\DeclareMathOperator{\tp}{tp}  
\newcommand{\N}{\ensuremath{\mathbb{N}}}
\newcommand{\Q}{\ensuremath{\mathbb{Q}}}
\newcommand{\Loo}{\ensuremath{L_{\omega_1,\omega}}} 
\newcommand{\Looq}{\ensuremath{\Loo(Q)}}
\renewcommand{\phi}{\varphi}
\renewcommand{\le}{\ensuremath{\leqslant}}
\renewcommand{\ge}{\ensuremath{\geqslant}}
\newcommand{\class}[2]{\ensuremath{\left\{ #1 \,\left|\, #2 \right.\right\}}}
\newcommand{\into}{\hookrightarrow}
\newcommand{\subs}{\subseteq} 
\newcommand{\minus}{\ensuremath{\smallsetminus}}
\newcommand{\nstrong}{\ensuremath{\not\kern-4pt\lhd\;}} 
\newcommand{\cross}{\ensuremath{\times}}
\newbox\noforkbox \newdimen\forklinewidth
\noforkbox\hbox{\lower 2pt\box1\lower
2pt\box0\relax}
\def\unionstick{\mathop{\copy\noforkbox}\limits}
\def\nonfork_#1{\unionstick_{\textstyle #1}}
\newbox\doesforkbox
\doesforkbox\hbox{\lower 2pt\box1 \lower
2pt\box2\lower2pt\box0\relax}
\def\nunionstick{\mathop{\copy\doesforkbox}\limits}
\def\fork_#1{\nunionstick_{\textstyle #1}}
\newtheorem{prop}{Proposition}[section]
\newtheorem{cor}[prop]{Corollary}
\newtheorem{theorem}[prop]{Theorem}
\newtheorem{lemma}[prop]{Lemma}
\newtheorem{claim}{Claim}
\theoremstyle{definition}
\newtheorem{defn}[prop]{Definition}
\newtheorem{remark}[prop]{Remark}
\newcommand{\closed}{\ensuremath{\preccurlyeq_{\mathrm{cl}}}}
\newcommand{\bdy}{\ensuremath{\partial}}
\newcommand{\qps}{quasiminimal pregeometry structure}
\newcommand{\monster}{\mathfrak{M}}
\newcommand{\K}{\mathcal{K}}
\newcommand{\aaa}{\bar{a}}
\newcommand{\bee}{\bar{b}}
\newcommand{\cee}{\bar{c}}
\newcommand{\dee}{\bar{d}}
\newcommand{\notindep}{\mathrel{\lower0pt\hbox to 3pt{\kern3pt$\not$\hss}\downarrow}}
\newcommand{\absnotion}{\downarrow^{abs}}
\newcommand{\notabsnotion}{\mathrel{\lower0pt\hbox to 3pt{\kern3pt$\not$\hss}\absnotion}}
\newcommand{\concat}{{}^\frown}
\title{Quasiminimal structures and excellence}
\author{Martin Bays, Bradd Hart, Tapani Hyttinen,\\Meeri Kes\"al\"a and Jonathan Kirby}
\date{\today}
\begin{document}

\maketitle

\begin{abstract}
  We show that the excellence axiom in the definition of Zilber's quasiminimal
  excellent classes is redundant, in that it follows from the other axioms.
  This substantially simplifies a number of categoricity proofs.
\end{abstract}

\section{Introduction}

{
\renewcommand{\thefootnote}{}
\footnotetext{The fourth author was funded by the Academy of Finland, project
number 1251557.\\
Published in the Bulletin of the London Mathematical Society 2013
doi:10.1112/blms/bdt076.
}

The notion of a quasiminimal excellent class was introduced by Boris Zilber in
\cite{Zilber05qmec} in order to prove categoricity of his non-elementary
theories of covers of the multiplicative group of a field (group covers)
\cite{Zilber06covers} and of pseudoexponential fields \cite{Zilber05peACF0}.
The excellence axiom is the most technical part, and is adapted from Shelah's
work on excellent sentences of $\Loo$ \cite{Sh87a}. Both Shelah's and Zilber's
work on excellence is described in Baldwin's monograph
\cite{Baldwin_Categoricity}. Zilber's original proof
of categoricity of group covers contained a gap, which was corrected in \cite{BZ11} by
strengthening a hypothesis in one of the statements relating to excellence and
giving a new proof. However, the proof of the categoricity of
pseudoexponential fields relied on the original stronger and now unproved
statement from \cite{Zilber06covers}. A patch for the categoricity proof for
pseudoexponential fields was recently circulated by the first and fifth
authors \cite{BK12patch}.

In this paper we show that the excellence axiom of quasiminimal excellence
classes is actually redundant, in that it follows from the other axioms. This
substantially simplifies the proof of categoricity of Zilber's group covers
and pseudoexponential fields, and avoids the troublesome part of the proofs
where the gaps were.

In the case of first-order theories, part of Shelah's Main Gap theorem
involves reducing a condition on $n$-systems of models, akin to excellence, to
the case $n=2$, where it becomes the condition (PMOP) that primary models
exist over independent pairs of models
(\cite{ShelahClassificationTheory},\cite{HartNOTOP}). The main insight behind
the current paper is that these arguments, suitably modified, apply also to
the (non-elementary) classes of structures considered here - and moreover that
the reduction can be pushed even further, to $n=1$, where the condition
becomes one of $\aleph_0$-stability over models. This reduction is performed in
Proposition~\ref{excellence}. In Propositions~\ref{splittaus} and
\ref{isolation in closure}, we find that this $\aleph_0$-stability condition
does follow from the $\aleph_0$-homogeneity over models assumed of
quasiminimal excellent classes. This argument is based on a classical argument
from stability theory, but the version in this paper is a modification of a
corresponding argument in the non-elementary framework of finitary AECs
\cite{indepInLocalAEC}.

An uncountable structure $M$ is {\em quasiminimal} if every first-order
$M$-definable subset of $M$ is countable or co-countable. In
section~\ref{sec:qm}, we consider in the light of our main results the question
of when a quasiminimal structure belongs to a quasiminimal excellent class.

Our main results directly answer Question~1 in \cite[Section~6]{OQMEC}.
They also render Question~2 there redundant: it asks for equivalence of the
excellence axiom and the conclusion of \cite[Lemma~3.2]{OQMEC}, which we show
both to be consequences of the other axioms, hence trivially equivalent
modulo them. The remaining questions, 3-5, concern finite-dimensional models;
our techniques say little about these, and in fact it is key to the proof of
our main result that we deal only with infinite-dimensional models.

The authors would like to thank the Max Planck Institute for Mathematics,
Bonn, where some of this work was done.

\section{Statement of main result}

Throughout this paper, the notion of type will be quantifier-free $L$-type,
denoted by $\tp$. It will follow from our axioms that if finite tuples $\abar$ and
$\bbar$ from a model satisfy the same quantifier-free $L$-type then they
satisfy the same complete type (and even the same $L_{\infty,\omega}$-type),
justifying our notation. In applications this is usually achieved by expanding the
language. However it does not necessarily follow that the
first-order theory of our models has quantifier-elimination, since not all
types of the first-order theory are necessarily realised in the models we
consider.

\begin{defn}
  Let $M$ be an $L$-structure for a countable language $L$, equipped with a
  pregeometry $\cl$ (or $\cl_M$ if it is necessary to specify $M$). We say that
  $M$ is a \emph{quasiminimal pregeometry structure} if the following hold:
  \begin{enumerate}[QM1.]
    \item The pregeometry is determined by the language. That is, if
      $\tp(a,\bbar) = \tp(a',\bbar')$ then $a \in \cl(\bbar)$ if and only if $a'
      \in \cl(\bbar')$.
    \item $M$ is infinite-dimensional with respect to $\cl$.
    \item (Countable closure property) If $A \subs M$ is finite then $\cl(A)$
      is countable.
    \item (Uniqueness of the generic type) Suppose that $H, H' \subs M$ are
      countable closed subsets, enumerated such that $\tp(H)=\tp(H')$. If $a
      \in M \minus H$ and $a' \in M \minus H'$ then  $\tp(H,a) = \tp(H',a')$
      (with respect to the same enumerations for $H$ and $H'$).
    \item ($\aleph_0$-homogeneity over closed sets and the empty set)
      \ \\ Let $H, H' \subs M$ be countable closed subsets or empty,
      enumerated such that $\tp(H)=\tp(H')$, and let $\bbar,\bbar'$ be finite
      tuples from $M$ such that $\tp(H,\bbar) = \tp(H',\bbar')$, and let $a
      \in \cl(H,\bbar)$. Then there is $a'\in M$ such that $\tp(H,\bbar,a) =
      \tp(H',\bbar', a')$.
  \end{enumerate}
  We say $M$ is a \emph{weakly quasiminimal pregeometry structure} if it
  satisfies all the axioms except possibly QM2.
\end{defn}

Note that, while in QM5 there is a restriction that $a \in \cl(H \bbar)$, in the presence of the other axioms this restriction can be removed. In particular we have the usual notion of $\aleph_0$-homogeneity of a structure:
\begin{lemma}\label{homogeneity over emptyset}
Let $M$ be a weakly quasiminimal pregeometry structure, let $\bbar,\bbar'$ be finite tuples from $M$ such that $\tp(\bbar) = \tp(\bbar')$, and let $\abar$ be a finite tuple from $M$. Then there is $\abar'$ in $M$ such that $\tp(\abar\bbar) = \tp(\abar'\bbar')$.
\end{lemma}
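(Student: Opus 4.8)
The plan is to reduce to the case where $\abar$ is a single element, and then split according to whether that element lies in $\cl(\bbar)$. Argue by induction on the length of $\abar$: writing $\abar=\abar_0a$ with $a$ a single element, the inductive hypothesis applied to $\abar_0$ over $\bbar,\bbar'$ gives $\abar_0'$ with $\tp(\abar_0\bbar)=\tp(\abar_0'\bbar')$, and it then suffices to find $a'$ with $\tp(\abar_0\bbar a)=\tp(\abar_0'\bbar'a')$, i.e.\ to handle a single element $a$ over the tuple $\abar_0\bbar$ (matched with $\abar_0'\bbar'$). So we may assume $\abar$ is a single element $a$.

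If $a\in\cl(\bbar)$, then QM5 with $H=H'=\emptyset$, applied to $\bbar,\bbar'$ and $a$, directly yields the desired $a'$. So assume $a\notin\cl(\bbar)$. The idea is to enlarge $\bbar,\bbar'$ to their closures and invoke QM4, which requires the following auxiliary fact: \emph{if $\tp(\bbar)=\tp(\bbar')$ then $\cl(\bbar)$ and $\cl(\bbar')$ can be enumerated so as to extend the given enumerations of $\bbar$ and $\bbar'$ and to satisfy $\tp(\cl(\bbar))=\tp(\cl(\bbar'))$.} Since $\cl(\bbar)$ and $\cl(\bbar')$ are countable by QM3, fix enumerations of them extending $\bbar,\bbar'$ and run a back-and-forth, maintaining finite tuples $\cbar$ extending $\bbar$ with all entries in $\cl(\bbar)$, and $\cbar'$ extending $\bbar'$ with all entries in $\cl(\bbar')$, such that $\tp(\cbar)=\tp(\cbar')$. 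To add the next element $c$ of $\cl(\bbar)$ on the left, note $c\in\cl(\bbar)=\cl(\cbar)$, so QM5 (with empty $H$) supplies $c'$ with $\tp(\cbar c)=\tp(\cbar'c')$, and then $c'\in\cl(\cbar')\subseteq\cl(\bbar')$ by QM1, so the invariant persists; the back steps are the symmetric application of QM5. In the limit every element of $\cl(\bbar)$ has entered on the left and every element of $\cl(\bbar')$ on the right, while neither side ever leaves the relevant closure, so the two enumerations list precisely $\cl(\bbar)$ and $\cl(\bbar')$ and have the same type.

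Granting this, $\cl(\bbar)$ and $\cl(\bbar')$ are countable closed sets enumerated with equal type and with $\bbar,\bbar'$ as initial segments. Applying QM1 to subtuples, $\tp(\bbar)=\tp(\bbar')$ gives $\dim\bbar=\dim\bbar'$, so $\dim\cl(\bbar')=\dim\bbar'=\dim\bbar=\dim\cl(\bbar)<\dim(\cl(\bbar)\cup\{a\})\le\dim M$, the strict inequality holding because $a\notin\cl(\bbar)$. Hence $\cl(\bbar')\ne M$, so we may pick $a'\in M\minus\cl(\bbar')$. Now QM4 gives $\tp(\cl(\bbar),a)=\tp(\cl(\bbar'),a')$, and restricting both sides to the initial segments $\bbar,a$ and $\bbar',a'$ yields $\tp(\bbar a)=\tp(\bbar'a')$. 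Since this uses only QM1, QM3, QM4 and QM5, it is valid for a weakly quasiminimal pregeometry structure.

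I expect the back-and-forth fact to be the crux: the delicate point is that the enumeration built on the right must exhaust all of $\cl(\bbar')$ and not merely some closed set containing $\bbar'$, which is precisely why both directions of the back-and-forth are needed and why one must verify, via QM1, that neither approximating tuple leaves the closure of the corresponding parameter tuple. The remaining steps are routine applications of the axioms.
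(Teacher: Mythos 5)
Your proof is correct and follows essentially the same route as the paper's: reduce to a single element $a$, apply QM5 (with $H=H'=\emptyset$) when $a\in\cl(\bbar)$, and otherwise use the dimension count $\dim\cl(\bbar')=\dim(\bbar')<\dim(\bbar a)\le\dim M$ to pick $a'\notin\cl(\bbar')$ and conclude by QM4. The only difference is that you spell out, via a QM1/QM5 back-and-forth, the construction of type-preserving enumerations of $\cl(\bbar)$ and $\cl(\bbar')$ extending $\bbar\mapsto\bbar'$, a step the paper's application of QM4 leaves implicit.
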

\begin{proof}
We may assume that $\abar$ is a singleton, $a$. If $a \in \cl(\bbar)$ then apply QM5. If not, since $\cl$ is a pregeometry and using QM1 we have $\dim M \ge \dim(a \bbar) = \dim(\bbar)+1 = \dim(\bbar')+1$. So there is $c \in M \minus \cl(\bbar')$, and by QM4 we can take $a'$ to be any such $c$.
\end{proof}

Given $M_1$ and $M_2$ both weakly quasiminimal pregeometry $L$-structures, we
say that an $L$-embedding $\theta: M_1 \into M_2$ is a \emph{closed embedding}
if for each $A \subs M_1$ we have $\theta(\cl_{M_1}(A))=\cl_{M_2}(\theta(A))$. In particular, $\theta(M_1)$ is closed in $M_2$ with respect to $\cl_{M_2}$. We write $M_1 \closed M_2$ for a closed embedding.

Given a \qps\ $M$, let $\K^-(M)$ be the smallest class of $L$-structures which
contains $M$ and all its closed substructures and is closed under isomorphism,
and let $\K(M)$ be the smallest class containing $\K^-(M)$ which is also
closed under taking unions of chains of closed embeddings. Then both $\K^-(M)$
and $\K(M)$ satisfy axioms 0, I, and II of quasiminimal excellent classes from
\cite{OQMEC}, and $\K(M)$ also satisfies axiom IV and, together with closed
embeddings, forms an abstract elementary class. We call any class of the form
$\K(M)$  a \emph{quasiminimal class}.

Our main result is:
\begin{theorem}\label{cat theorem}
  If $\K$ is a quasiminimal class then every structure $A \in \K$ is a weakly
  \qps, and up to isomorphism there is exactly one structure in $\K$ of each
  cardinal dimension. In particular, $\K$ is uncountably categorical.
  Furthermore, $\K$ is the class of models of an $\Looq$ sentence.
\end{theorem}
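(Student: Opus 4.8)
The plan is to verify that $\K$, with the closed embeddings as its notion of strong substructure, satisfies \emph{all} the axioms of a quasiminimal excellent class in the sense of \cite{OQMEC}, and then to invoke the categoricity and $\Looq$-axiomatizability theorem for such classes (\cite{Zilber05qmec}, \cite{OQMEC}). By the discussion preceding the statement, $\K=\K(M)$ already satisfies axioms~0, I, II and~IV there and forms an abstract elementary class with $\closed$, so the one remaining axiom to check is the excellence axiom. First, though, one should see that every $A\in\K$ is a weakly \qps. For $A\in\K^-(M)$ this is immediate, as a closed substructure of a \qps\ inherits $\cl$ by restriction and then satisfies QM1, QM3, QM4 and QM5 verbatim (only QM2 may fail). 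For a union $A=\bigcup_\alpha M_\alpha$ of a chain of closed embeddings one sets $\cl_A(B)=\bigcup_{B_0\subsetfin B}\cl_{M_\alpha}(B_0)$, which is well defined since closed embeddings preserve $\cl$; then QM1 and QM3 are clear, while QM4 and QM5 reduce, by finite character together with the $\aleph_0$-homogeneity over $\emptyset$ of the $M_\alpha$ (Lemma~\ref{homogeneity over emptyset}), to a single application of QM4 or QM5 inside one $M_\alpha$. One inducts on the construction of $\K(M)$; this gives the first assertion of the theorem.

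The real content lies in the excellence axiom, and the route to it has two stages. The first, carried out in Propositions~\ref{splittaus} and~\ref{isolation in closure}, extracts from the $\aleph_0$-homogeneity of the structures in $\K$ (with QM4 and QM5) an $\aleph_0$-\emph{stability} statement: over a countable closed subset $H$ of a structure in $\K$, a complete---equivalently, quantifier-free---type is determined by its restriction to a finite subset of $H$; equivalently, types over $H$ are isolated and $H$ carries primary extensions. The proof is the classical one: a type splitting over every finite set produces a binary tree of pairwise distinct types over a countable closed set, which is impossible. This is transplanted into the present non-elementary framework following the analogous argument for finitary abstract elementary classes in \cite{indepInLocalAEC}.

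The second stage, Proposition~\ref{excellence}, deduces excellence from this $\aleph_0$-stability. One runs the inductive reduction familiar from the proof of the Main Gap, whereby the existence of primary models over an independent $n$-dimensional system of countable closed sets is reduced to smaller values of $n$; here, unlike the first-order situation where one stops at $n=2$, the reduction is carried all the way down to $n=1$, at which point the statement is precisely the $\aleph_0$-stability over models secured in the first stage. This is where I expect the main difficulty to be: one must fix the right notion of an independent system of countable closed sets in a \qps, show that it is preserved by the amalgamated unions used in the induction, and verify that isolation of types over a single closed set propagates through those unions.

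Once excellence is in place, $\K$ is a quasiminimal excellent class and the cited categoricity theorem applies: up to isomorphism $\K$ contains exactly one structure of each dimension, hence is uncountably categorical, and is the class of models of an $\Looq$ sentence. Concretely, uniqueness in dimension $\kappa$ is a back-and-forth. For countable $\kappa$ an isomorphism between two structures of dimension $\kappa$ is built directly from QM4 and QM5, Lemma~\ref{homogeneity over emptyset} being used to drop the restriction $a\in\cl(H\bbar)$ in QM5. For uncountable $\kappa$ one writes each structure as a continuous increasing union, of length $\kappa$, of closed substructures of dimension $<\kappa$, and extends a partial isomorphism by transfinite recursion, using excellence to carry out the successor steps and continuity at limits; running the construction freely rather than against a given structure produces a model of each prescribed dimension. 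Finally, the $\Looq$ sentence axiomatizing $\K$ combines the $\Loo$-sentences true in every member of $\K$ with a $Q$-sentence expressing the countable closure property QM3; by the homogeneity and stability already obtained, any structure satisfying it is isomorphic to one of the canonical models and so belongs to $\K$.
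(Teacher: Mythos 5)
Your proposal is correct and follows essentially the same route as the paper: establish excellence via Propositions~\ref{splittaus}, \ref{isolation in closure} and \ref{excellence}, then invoke the categoricity and $\Looq$-definability machinery of \cite{OQMEC}. The only substantive content of the paper's own proof that you gloss is the bookkeeping step of applying that machinery to $\K(M)$ for an $\aleph_0$-dimensional closed submodel $M$ (the form of model to which Proposition~\ref{excellence} and the cited theorems directly apply) and then identifying $\K=\K(M)$ by writing the generating structure as a directed union of $\aleph_0$-dimensional closed subsets; your direct verification that every member of $\K$ is a weakly \qps\ and that $\K$ itself satisfies the excellent-class axioms is sketchier at the chain-union step but does not change the approach.
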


When $M$ satisfies an additional property called \emph{excellence},
Theorem~\ref{cat theorem} is Zilber's main categoricity theorem, specifically
in the form from \cite[Theorem~4.2 and Corollary~5.7]{OQMEC}, along with the
$\Looq$-definability result \cite[Theorem~5.5]{OQMEC}. We will prove
Theorem~\ref{cat theorem} by showing in Proposition~\ref{excellence} that the
specific form of  the excellence property used in the categoricity proof
always holds. 

Assuming that Proposition, we prove the main theorem.
\begin{proof}[Proof of Theorem~\ref{cat theorem}]

  Let $\monster$ be a \qps\ and $\K = \K(\monster)$. As in
  \cite[Theorem~2.2]{OQMEC}, all closed subsets of $\monster$ of dimension
  $\aleph_0$ are isomorphic to each other, and are also \qps s. Let $M$ be one.
  Then by Proposition~\ref{excellence}, $M$ satisfies the excellence
  property. Thus by \cite[Corollary~5.7 and Theorem~4.2]{OQMEC}, $\K(M)$ is a
  quasiminimal excellent class and has exactly one model of each cardinal
  dimension, and by \cite[Theorem~5.5]{OQMEC} it is the class of models of an
  $\Looq$ sentence. It remains to show that $\K = \K(M)$. Let $B$ be a basis for
  $\monster$, and note that $\monster = \bigcup \class{\cl(B')}{B' \subs B, |B'|
  = \aleph_0} $. Since $\K(M)$ is closed under unions of chains it is also
  closed under unions of directed systems, and hence $\monster \in \K(M)$. Thus
  $\K(\monster) = \K(M)$.
\end{proof}

\section{Models and types}

Let $\K$ be a quasiminimal class. We call the structures in $\K$
\emph{models}. Then by \cite[Theorem~2.1]{OQMEC}, the models of dimension up
to $\aleph_1$ are determined up to isomorphism by their dimension.
Furthermore, back-and-forth arguments as in the proof of that theorem
immediately give us the following characterization of types.

\begin{lemma}\label{Galois} 
  Let $\monster$ be a model of dimension $\le \aleph_1$, let $M \closed
  \monster$ with $M$ countable, let $H = \emptyset$ or $H \closed M$, and let
  $\abar, \bbar$ be $n$-tuples from $M$. Then the following are equivalent.
  \begin{itemize} 
    \item $\tp(\abar/H) = \tp(\bbar/H)$.
    \item There exists $f\in\Aut(M/H)$ with $f(\aaa)=\bee$. 
    \item There exists $f\in\Aut(\monster/H)$ with $f(\aaa)=\bee$.
    \item There exists $f\in\Aut(\monster/H)$ with $f(\aaa)=\bee$ and $f(M)=M$.
      \qed
  \end{itemize}
\end{lemma}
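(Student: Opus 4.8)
The plan is to establish the cycle of implications $(4)\Rightarrow(3)\Rightarrow(1)\Rightarrow(2)\Rightarrow(4)$, which yields all the equivalences (in particular $(2)\Rightarrow(1)$, via $(4)$ and $(3)$). The implication $(4)\Rightarrow(3)$ is trivial, and $(3)\Rightarrow(1)$ and $(2)\Rightarrow(1)$ are immediate: an $L$-automorphism of $\monster$, respectively of $M$, fixing $H$ pointwise and sending $\abar$ to $\bbar$ carries every quantifier-free $L$-formula over $H$ satisfied by $\abar$ to the same formula satisfied by $\bbar$, and such formulas are absolute between $M$ and $\monster$ since $M$ is an $L$-substructure; hence $\tp(\abar/H)=\tp(\bbar/H)$. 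So the content is the two back-and-forth arguments $(1)\Rightarrow(2)$ and $(2)\Rightarrow(4)$. Note first that, since $M$ is closed in $\monster$, it inherits QM1, QM3, QM4 and QM5 (it need not satisfy QM2, but that is not used below).

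For $(1)\Rightarrow(2)$: build $f$ as the union of an increasing chain of partial maps, the first being the identity on $H$ together with $\abar\mapsto\bbar$ (an at most countable partial map, since $H\subseteq M$ is countable), each preserving quantifier-free $L$-types over $H$, each obtained from its predecessor by putting the next element of a fixed enumeration of $M$ into the domain and, alternately, into the range. The only step needing comment is the extension step, which is $\aleph_0$-homogeneity over $H$ with the restriction ``$a\in\cl(H\bbar)$'' of QM5 removed: given finite tuples $\cbar,\cbar'$ from $M$ with $\tp(\cbar/H)=\tp(\cbar'/H)$ and $d\in M$, produce $d'\in M$ with $\tp(\cbar d/H)=\tp(\cbar'd'/H)$. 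If $d\in\cl(H\cbar)$, QM5 applies directly (taking both closed sets to be $H$, with the identity enumeration). If $d\notin\cl(H\cbar)$, then $\dim(M/\cl(H\cbar))\ge 1$; since QM1 gives $\dim(\cbar/H)=\dim(\cbar'/H)$, additivity of the pregeometry dimension gives $\dim(M/\cl(H\cbar'))=\dim(M/\cl(H\cbar))\ge 1$, so we may choose $d'\in M\minus\cl(H\cbar')$; extending $\tp(H\cbar)=\tp(H\cbar')$ to a type-equality between compatible enumerations of the closed sets $\cl(H\cbar)$ and $\cl(H\cbar')$ (countable, by QM3 and finiteness of $\cl$) via a further back-and-forth using QM5, QM4 then gives $\tp(\cl(H\cbar)\,d)=\tp(\cl(H\cbar')\,d')$, hence $\tp(\cbar d/H)=\tp(\cbar'd'/H)$. (For $H=\emptyset$ this step is exactly Lemma~\ref{homogeneity over emptyset}.)

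For $(2)\Rightarrow(4)$: let $f_0\in\Aut(M/H)$ with $f_0(\abar)=\bbar$, and fix a basis $B$ of $\monster$ containing a basis of $M$, so that $|B|\le\aleph_1$ and all proper initial segments are countable. Extend $f_0$ by transfinite recursion along an enumeration of the basis elements in $B$ not already in $M$: given a type-preserving self-bijection of the countable closed set $\cl(M\cup C)$ extending $f_0$ (with $C$ an initial segment of those basis elements), adjoin the next such element $b$ by the assignment $b\mapsto b$ --- legitimate because, by uniqueness of the generic type (QM4), the quantifier-free type of $b$ over $\cl(M\cup C)$ is the same computed on the domain side and on the range side --- and extend this over $\cl(M\cup C\cup\{b\})$ by iterated QM5 together with a local back-and-forth (the domain-closure and range-closure here coincide), taking unions at limit stages. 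The resulting $f=\bigcup$ is a type-preserving self-bijection of $\cl(B)=\monster$, i.e.\ an automorphism of $\monster$, extending $f_0$; hence $f$ fixes $H$, $f(\abar)=\bbar$, and $f(M)=M$. The hypothesis $\dim\monster\le\aleph_1$ enters exactly here: it keeps every intermediate closed set countable, so that QM4 and QM5 apply throughout and the recursion runs to completion.

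The step I expect to be the crux is the extension step in $(1)\Rightarrow(2)$, that is, proving $\aleph_0$-homogeneity over countable closed sets in full by dropping the ``$a\in\cl(H\bbar)$'' hypothesis of QM5; its delicate case, when the new element is generic, is handled by the dimension count above (additivity of $\dim$ together with QM1) to place a suitable element inside $M$ itself. The remaining ingredients --- the type-equality of closures and the transfinite bookkeeping in $(2)\Rightarrow(4)$ --- are routine variations on the proof of \cite[Theorem~2.1]{OQMEC}.
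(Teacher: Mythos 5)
Your proof is correct and is essentially the paper's intended argument: the paper gives no explicit proof, citing only ``back-and-forth arguments as in the proof of \cite[Theorem~2.1]{OQMEC}'', and your chain $(4)\Rightarrow(3)\Rightarrow(1)\Rightarrow(2)\Rightarrow(4)$, with the QM4/QM5 back-and-forth over closed sets and the transfinite extension along a basis using $\dim\monster\le\aleph_1$ to keep intermediate closures countable, is exactly that argument spelled out.
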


Thus Galois types coincide with syntactic types for types over the empty set and
over models, and furthermore Galois types do not depend on the model in which
they are calculated (we have shown this for models of dimension at most
$\aleph_1$, but it will follow from our main result that it holds for arbitrary
models).






\section{Splitting of types}\label{splitting section}
\begin{defn}
  Let $\monster$ be a model, and let $B\subs\monster$ and $\aaa\in\monster$.
  We say that $\tp(\aaa/B)$ \emph{splits} over a finite $A\subs
  B$ if there are finite tuples $\cee$ and $\dee$ in $B$ with 
  \[\tp(\cee/A)=\tp(\dee/A)\qquad\mbox{ but}\]
  \[\tp(\cee/A\cup\aaa)\neq\tp(\dee/A\cup\aaa).\]
\end{defn}
 
\begin{prop}\label{splittaus}
  Let $\monster$ be a model and $M\closed \monster$ be countable closed submodel.
  For each finite tuple $\aaa\in\monster$ there is a finite $A\subs M$ such
  that $\tp(\aaa/M)$ does not split over $A$. 
\end{prop}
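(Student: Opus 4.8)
The plan is to establish the contrapositive by a splitting-tree argument run against an $\aleph_0$-stability bound extracted from QM4 and QM5. First I would reduce the ambient model. Since $M$ is a countable closed submodel and $\aaa$ is finite, $\cl_\monster(M\aaa)$ is a countable closed submodel; it is a \qps\ by \cite[Theorem~2.2]{OQMEC}, and every quantifier-free type occurring in the definition of ``$\tp(\aaa/M)$ splits over $A$'' has parameters in $M\cup\aaa$, hence is computed the same way in it as in $\monster$. Passing to it and then, by Lemma~\ref{Galois}, to a homogeneous model of dimension $\aleph_1$ containing it (which again leaves the relevant quantifier-free types unchanged), I may assume $\monster$ is a homogeneous \qps\ of dimension $\aleph_1$; in particular $\monster$ is $\aleph_0$-homogeneous over finite subsets (by Lemma~\ref{homogeneity over emptyset}) and realises every quantifier-free type of a finite tuple over a countable closed submodel.

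The stability input is: for any countable closed submodel $B\closed\monster$ there are only countably many quantifier-free types $\tp(\bar u/B)$ of finite tuples $\bar u$. I would prove this by induction on $|\bar u|$. For $|\bar u|=1$, either $u\in B$ --- countably many choices --- or $u\notin B=\cl(B)$, and then QM4, applied with $H=H'=B$ and the identity enumeration, gives that there is a single such type. For the inductive step write $\bar u=u\,\bar u'$; there are countably many possibilities for $\tp(u/B)$, and after fixing one, a back-and-forth based on QM5 identifies the countable closed submodel $\cl(Bu)$ with a fixed copy depending only on $\tp(u/B)$, bounding the number of $\tp(\bar u'/\cl(Bu))$ by the inductive hypothesis; multiplying gives countably many.

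For the main argument, suppose for contradiction that $p:=\tp(\aaa/M)$ splits over every finite subset of $M$. Recursively build increasing finite $A_0\subs A_1\subs\cdots\subs M$, finite tuples $\cee_n,\dee_n$ from $M$, quantifier-free formulas $\psi_n$, and $\sigma_n\in\Aut(\monster)$: using splitting of $p$ over $A_n$ pick $\cee_n,\dee_n$ with $\tp(\cee_n/A_n)=\tp(\dee_n/A_n)$ and $\psi_n$ with $\models\psi_n(\aaa;\cee_n)\wedge\neg\psi_n(\aaa;\dee_n)$; by $\aleph_0$-homogeneity pick $\sigma_n$ fixing $A_n$ with $\sigma_n(\cee_n)=\dee_n$; and set $A_{n+1}\sups A_n\cup\cee_n\cup\dee_n$. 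Put $B:=\cl(\bigcup_n A_n)$, a countable closed submodel. For each $\eta\in2^{\omega}$ one assembles the $\sigma_n$ along the finite initial segments of $\eta$ into a quantifier-free type $q_\eta$ over $B$ --- realised in $\monster$ since $\monster$ is homogeneous of dimension $\aleph_1>\aleph_0\ge\dim B$ --- such that (i) along each branch the finite approximations cohere, so $q_\eta$ is a well-defined type over the \emph{fixed} countable set $B$, and (ii) if $\eta,\eta'$ first differ at level $k$, then a translate of $\psi_k$, which has all parameters in $A_{k+1}\subs B$, separates $q_\eta$ from $q_{\eta'}$. This produces $2^{\aleph_0}$ distinct quantifier-free types over the countable closed submodel $B$, contradicting the stability input; hence $p$ does not split over some finite $A\subs M$.

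The main obstacle is achieving (i) and (ii) simultaneously. Even in the first-order setting the order in which the $\sigma_n$ are composed matters, and one must control how each $\sigma_n$ moves the tuples $\cee_i,\dee_i$ chosen at earlier stages, so that coherence along branches does not spoil distinctness of branch-types (or vice versa); and here one must additionally keep the branch-types realisable, i.e.\ not push parameters outside $B$ and not make them inconsistent. This is exactly where the classical stability-theoretic argument must be modified, and I would follow the finitary-AEC version in \cite{indepInLocalAEC}: strengthen the recursion so that suitable enumerations of the closed sets generated by the $\cee_n,\dee_n$ --- matched up using QM5 --- form a tree homogeneous enough that the $q_\eta$ are realised via the homogeneity of $\monster$, while the separating formulas stay over $B$.
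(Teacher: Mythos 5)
Your overall strategy is the same splitting-tree argument as the paper's (which is itself adapted from \cite{indepInLocalAEC}), and your preliminary steps are sound: the reduction to $\cl(M\aaa)$ is exactly how the paper begins, and your $\aleph_0$-stability count over countable closed submodels is indeed provable from QM4 and Lemma~\ref{Galois} by the induction you sketch. But the heart of the proof is missing, and you say so yourself: the recursion that makes the branch types simultaneously well-defined over a fixed countable set, realised, and pairwise distinct is exactly the mathematical content of Proposition~\ref{splittaus}, and deferring it to \cite{indepInLocalAEC} leaves a genuine gap. Moreover your fallback for realisation --- that each $q_\eta$ is realised ``since $\monster$ is homogeneous of dimension $\aleph_1$'' --- does not work as stated: there is no compactness in this setting, so a type over a countable set whose finite approximations are realised need not be realised in any model of the class; Lemma~\ref{Galois} only transports types that are already realised, it does not provide saturation. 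Likewise the linear recursion you describe (one $\cee_n,\dee_n,\psi_n,\sigma_n$ per level, composed branch-by-branch afterwards) cannot by itself deliver the separation over parameters inside $B$: the witnesses relevant to a given branch are the images of $\cee_n,\dee_n$ under the branch map built so far, so the finite parameter sets must depend on the tree node, not just on the level.

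The paper resolves precisely these points by a node-indexed construction carried out entirely inside the countable model $\cl(M\aaa)$: one builds $A_\eta$ and $\sigma_\eta\in\Aut(\monster)$ for $\eta\in 2^{<\omega}$ with $\sigma_\eta(M)=M$, sets $\sigma_{\eta\concat 0}=\sigma_\eta$ and $\sigma_{\eta\concat 1}=\sigma_\eta\circ f$ (where $f$ fixes $A_\eta$, maps $\cee$ to $\dee$ and preserves $M$ setwise, by Lemma~\ref{Galois}), and pads $A_{\eta\concat i}$ with $e_{k+1}$, $\sigma_{\eta\concat i}^{-1}(e_{k+1})$, $\cee$, $\dee$ so that each branch union is an automorphism of $M$ itself, which then extends to $\cl(M\aaa)$ by Lemma~\ref{Galois}; distinctness along branches is verified by a conjugation argument over $B_\eta=\sigma_{\eta\concat0}(A_{\eta\concat0})\cap\sigma_{\eta\concat1}(A_{\eta\concat1})$. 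With this arrangement all $2^{\aleph_0}$ realisations $\pi_\mu(\aaa)$ lie in the countable structure $\cl(M\aaa)$, so the contradiction is immediate from countability --- no $\aleph_0$-stability lemma and no passage to an $\aleph_1$-dimensional model (whose existence you would in any case have to justify at this stage) are needed. So while your stability input is correct, it is extra machinery standing in for the coherence construction you did not carry out, and it is that construction which constitutes the proof.
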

\begin{proof}
  Replacing $\monster$ with $\cl(M\aaa)$, we may assume $\monster$ to be countable.
  If $M$ has finite cardinality, we may take $A=M$. So assume $|M|=\aleph_0$.

  We suppose that no such finite $A$ exists and construct uncountably many types
  over $M$, all realised in $\monster$. This contradicts the countability of
  $\monster$. 

  Enumerate $M=\{e_n:n<\omega\}$. For each $k<\omega$  and $\eta: k\to 2$ we
  denote by $\eta^{\frown}0$ and $\eta^{\frown}1$  the functions with domain
  $k+1$ extending $\eta$ and mapping $k$ to 0 and 1 respectively. Given any
  function $f$ and a subset $A$ of the domain of $f$ we write $f|A$ for the
  restricted function.

  We recursively construct finite sets $A_\eta$ and automorphisms
  $\sigma_\eta\in\Aut(\monster)$ such that:
  \begin{enumerate}
    \item $\sigma_\eta(M)=M$.
    \item $\eta\subset\tau$ implies $\sigma_{\tau} | A_{\eta}=\sigma_\eta | A_{\eta}$. 
    \item For any $\mu:\omega\to 2$, we have that  
      \[M=\bigcup_{k<\omega}A_{\mu | k}\]
      and that
      \[M=\bigcup_{k<\omega}\sigma_{\mu|k}(A_{\mu | k}).\]
    \item $\tp(\sigma_{\eta^{\frown}0}(\aaa)/B_\eta)\neq \tp(\sigma_{\eta^{\frown}1}(\aaa)/B_\eta)$ where 
      \[B_\eta=\sigma_{\eta^{\frown}0}(A_{\eta^{\frown}0})\cap \sigma_{\eta^{\frown}1}(A_{\eta^{\frown}1})\subset M.\]
  \end{enumerate}

  First let $A_{\emptyset}=\emptyset$ and $\sigma_{\emptyset}=Id_{\monster}$.
  Then assume we have defined these for all $\eta$ with domain $\leq k$. 

  Since $\tp(\aaa/M)$ splits over $A_{\eta}$ by assumption, there are finite
  $\cee,\dee\in M$ with $$\tp(\cee/A_{\eta})=\tp(\dee/A_{\eta})\textrm{ but}$$
  $$\tp(\cee/A_{\eta}\cup\aaa)\neq\tp(\dee/A_{\eta}\cup\aaa).$$ Hence there is
  $f\in\Aut(\monster/A_{\eta})$ mapping $\cee$ to $\dee$ and by Lemma
  $\ref{Galois}$ we may assume that $f(M)=M$.

  Let $\sigma_{\eta^{\frown}0}=\sigma_\eta$ and $\sigma_{\eta^{\frown}1}=
  \sigma_\eta\circ f$. Furthermore, for $i=0,1$ let
  $$A_{\eta^{\frown}i}=A_{\eta}\cup \{e_{k+1}, \sigma^{-1}_{\eta^{\frown}i}(e_{k+1}), \dee,\cee\}.$$
   We have that
  $$\sigma_{\eta^{\frown}1} | A_{\eta} = \sigma_{\eta^{\frown}0} | A_{\eta} =\sigma_\eta | A_{\eta}$$
  and that
  $$\sigma_{\eta^{\frown}1}(\cee)=\sigma_\eta(\dee)=\sigma_{\eta^{\frown}0}(\dee).$$
  Hence $\sigma_\eta(A_{\eta})$ and $\sigma_\eta(\dee)$ are in the set $B_\eta$
  of item 4.

  Now item 4 must hold, since if there were
  $g\in\Aut(\monster/\sigma_\eta(A_{\eta})\cup \sigma_\eta(\dee))$ mapping
  $\sigma_{\eta^{\frown}0}(\aaa)$ to $\sigma_{\eta^{\frown}1}(\aaa)$, the
  automorphism $\sigma^{-1}_{\eta^{\frown}1}\circ g\circ
  \sigma_{\eta^{\frown}0}$ would map $\dee$ to $\cee$ and fix $\aaa$ and
  $A_\eta$, contradicting splitting.

  Finally we define for each $\mu:\omega\to 2$ a map $f_{\mu}$ as the union of
  the restricted maps $\sigma_{\mu |k}$ on $A_{\mu |k}$ for $k<\omega$. By item
  2 the map is well-defined and by item 3 it is an automorphism of $M$. By
  Lemma~\ref{Galois}, each $f_{\mu}$ extends to an automorphism $\pi_\mu$ of
  $\monster$.

  Now suppose $\mu,\nu : \omega \to 2$ are distinct, let $k$ be greatest such
  that $\mu |k = \nu | k$, and let $\eta = \mu | k$. Then without loss of
  generality, $\mu|k+1 = \eta\concat 0$ and $\nu | k+1 = \eta\concat1$. Thus
  $\pi_\mu | A_{\eta\concat0} = \sigma_{\eta\concat0}|A_{\eta\concat0}$, so
  \[\tp(\pi_\mu(\abar)/\sigma_{\eta\concat0}(A_{\eta\concat0})) =
  \tp(\pi_\mu(\abar)/\pi_\mu(A_{\eta\concat0})) =
  \tp(\sigma_{\eta\concat0}(\abar)/\sigma_{\eta\concat0}(A_{\eta\concat0})) \]
  Since $B_\eta \subs \sigma_{\eta\concat0}(A_{\eta\concat0})$ we have
  $\tp(\pi_\mu(\abar)/B_\eta) = \tp(\sigma_{\eta\concat0}(\abar)/B_\eta)$.

  The same argument shows that
  $\tp(\pi_\nu(\abar)/B_\eta) = \tp(\sigma_{\eta\concat1}(\abar)/B_\eta)$.

  Thus, by item 4, $\tp(\pi_\mu(\abar)/B_\eta) \neq \tp(\pi_\nu(\abar)/B_\eta)$
  and hence $\tp(\pi_\mu(\abar)/M)\neq\tp(\pi_{\nu}(\abar)/M)$. Thus we have
  $2^{\aleph_0}$ different types over $M$, all realised in $\monster$, a
  contradiction.
\end{proof}

\section{Isolation of types}
\begin{defn}
  Let $A$ be a subset of a model $\monster$ and let $\abar\in\cl(A)$. We say that
  the $\tp(\abar/A)$ is \emph{s-isolated} if there is a finite subset $A_0 \subs
  A$ such that if $\bbar \in \cl(A)$ and $\tp(\bbar/A_0) = \tp(\abar/A_0)$
  then $\tp(\bbar/A) = \tp(\abar/A)$. In this case we also say $\tp(\abar/A)$ is
  \emph{s-isolated over $A_0$}. 
\end{defn}

%

In Shelah's notation this is $F^s_{\aleph_0}$-isolation
\cite[p157]{ShelahClassificationTheory}. In general it does not imply
isolation of a type by a single formula, at least not without expanding the
language.

We show that types of tuples inside the closure of a model union a finite set
are s-isolated.

\begin{prop}\label{isolation in closure}
  Let $\monster$ be a model, and let $M\closed \monster$ be a countable closed
  submodel.
  Let $\abar$, $\bbar\in \monster$ be finite tuples with $\bbar \in
  \cl(M\abar)$. Then $\tp(\bbar/M\cup\abar)$ is s-isolated.
\end{prop}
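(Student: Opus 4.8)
The plan is to extract the required s-isolation from the non-splitting supplied by Proposition~\ref{splittaus}, combined with the $\aleph_0$-homogeneity over closed sets; this is a version, adapted to the present setting, of the classical stability-theoretic argument carried out in finitary AECs in \cite{indepInLocalAEC}. Some reductions first. The statement refers only to the closed set $\cl(M\abar)$ and the types realised in it, so I replace $\monster$ by $\cl(M\abar)$, a model of dimension at most $\aleph_0$; in particular Lemmas~\ref{homogeneity over emptyset} and~\ref{Galois} apply to all the tuples involved. If $M$ is finite then $\tp(\bbar/M\cup\abar)$ is s-isolated over $M\cup\abar$ trivially, so assume $\dim M=\aleph_0$. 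Finally, let $\abar_0$ be a maximal $\cl_M$-independent subtuple of $\abar$ and $\abar_1$ the rest; then $\abar_1\bbar\in\cl(M\abar_0)$, and (taking the base to contain $\abar_1$) it is routine that s-isolation of $\tp(\abar_1\bbar/M\abar_0)$ entails s-isolation of $\tp(\bbar/M\abar)$. Replacing $\abar,\bbar$ by $\abar_0,\abar_1\bbar$, I may thus assume in addition that $\abar$ is $\cl_M$-independent, hence generic over $M$.

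Now apply Proposition~\ref{splittaus} to the tuple $\abar\bbar$ to get a finite $A\subs M$ over which $\tp(\abar\bbar/M)$ does not split. Non-splitting over $A$ is preserved if $A$ is enlarged within $M$ (moving the extra parameters into the distinguished tuples), so, the pregeometry being finitary, I may assume $\bbar\in\cl(A\abar)$ and moreover that every coordinate of $\bbar$ lying in $M$ belongs to $A$; since $\abar$ is generic over $M$ one has $\cl(A\abar)\cap M=\cl(A)$, so the remaining coordinates of $\bbar$ lie in $\cl(A\abar)\minus M$.

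I claim $\tp(\bbar/M\cup\abar)$ is s-isolated over $A\cup\abar$. Let $\bbar'\in\cl(M\abar)$ with $\tp(\bbar'/A\abar)=\tp(\bbar/A\abar)$; by QM1 applied coordinate by coordinate, $\bbar'\in\cl(A\abar)$, and it remains to show $\tp(\abar\bbar'/M)=\tp(\abar\bbar/M)$. For this it suffices to produce $g\in\Aut(\monster)$ with $g\restrict{A\abar}=\id$, $g(\bbar)=\bbar'$ and $g(M)=M$: given such a $g$, for any finite $\cee\subs M$ the tuple $g^{-1}(\cee)$ is again in $M$ with $\tp(g^{-1}(\cee)/A)=\tp(\cee/A)$, so non-splitting of $\tp(\abar\bbar/M)$ over $A$ gives $\tp(g^{-1}(\cee)/A\abar\bbar)=\tp(\cee/A\abar\bbar)$; applying $g$, which fixes $A\abar$ and carries $\bbar$ to $\bbar'$, we obtain $\tp(\cee/A\abar\bbar')=\tp(\cee/A\abar\bbar)$, and since $\cee$ was arbitrary this gives $\tp(\abar\bbar'/M)=\tp(\abar\bbar/M)$, as wanted.

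It remains to construct $g$, and this is the hard part. One builds it by a back-and-forth, starting from the type-preserving partial map that fixes $A\abar$ and sends $\bbar$ to $\bbar'$, and extending it along simultaneous enumerations of $M$ and of a basis of $\monster$ over $\cl(A\abar)$, using QM4 to place basis elements and QM5 to place the remaining (closure) elements. The delicate point — and the only place the $\aleph_0$-stability input of Proposition~\ref{splittaus} is really needed — is to arrange that $M$ is carried onto $M$: since $\cl(A\abar)\cap M=\cl(A)$, the interaction of $M$ with $\cl(A\abar)$ (and so with $\bbar,\bbar'$) is governed by the finitely many parameters $A$ over which $\tp(\abar\bbar/M)$ does not split, which is what lets one match an element of $M$ with an element of $M$ at each stage. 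Carrying out this back-and-forth rigorously, in the spirit of \cite{indepInLocalAEC}, is the technical core of the argument; without the non-splitting one only knows that $\bbar'\mapsto\bbar$ over $A\abar$ is realised by automorphisms that move $M$, and then the types of elements of $M$ over $\abar\bbar$ are not controlled.
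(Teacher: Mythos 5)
Your reductions are fine (passing to $\cl(M\abar)$, making $\abar$ independent over $M$, enlarging $A$ so that $\bbar\in\cl(A\abar)$, and the observation $\cl(A\abar)\cap M=\cl(A)$), and your derivation of $\tp(\abar\bbar/M)=\tp(\abar\bbar'/M)$ from the existence of an automorphism $g$ fixing $A\abar$ pointwise, sending $\bbar\mapsto\bbar'$ and stabilising $M$ setwise, is correct. But the construction of $g$ is not a further "technical core" that can be deferred: by Lemma~\ref{Galois} (applied in the countable model $\cl(M\abar)$ with $H=M$), the existence of such a $g$ is essentially equivalent to the type equality $\tp(\abar\bbar/M)=\tp(\abar\bbar'/M)$ you are trying to prove. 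So your proposal reformulates the proposition rather than proving it, and the back-and-forth you gesture at is exactly where the missing idea lives. Worse, the hint you give for the forth step is circular: if you try to place $\dee\in M$ by first choosing $\dee'\in M$ with $\tp(\dee'/A)=\tp(\dee/A)$ (matching the $M$-part of the partial map) and then invoking non-splitting of $\tp(\abar\bbar/M)$ over $A$, you obtain $\tp(\dee/A\abar\bbar)=\tp(\dee'/A\abar\bbar)$ --- a statement involving $\bbar$ on both sides --- whereas to extend the partial map you need the image side to be computed over $\bbar'$, i.e.\ you need $\tp(\bbar/A\abar\dee')=\tp(\bbar'/A\abar\dee')$, which is precisely the s-isolation being proved. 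Non-splitting by itself does not "govern the interaction of $M$ with $\cl(A\abar)$" in the way you assert.

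The missing step is the one-step matching claim, and it is proved not with non-splitting but with the pregeometry, QM4 and $\aleph_0$-homogeneity over $\emptyset$: given $\dee\in M$, write $\dee=\dee_1\dee_2$ with $\dee_1$ independent over $\cl(A\abar)$ and $\dee_2\in\cl(A\abar\dee_1)$; the independence of $\abar$ from $M$ over $A$ forces $\dee_2\in\cl(A\dee_1)$. Since $\bbar,\bbar'\in\cl(A\abar)$ and $\dee_1$ is generic over $\cl(A\abar)$, QM4 gives $\tp(\bbar'/A\abar\dee_1)=\tp(\bbar/A\abar\dee_1)$, and Lemma~\ref{homogeneity over emptyset} then yields $\dee_2'$ with $\tp(\bbar'\dee_2/A\abar\dee_1)=\tp(\bbar\dee_2'/A\abar\dee_1)$; crucially $\dee_2'\in\cl(A\dee_1)\subs M$, so $\dee'=\dee_1\dee_2'$ lies in $M$. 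Note the matching is "crossed" ($\bbar'\dee$ against $\bbar\dee'$); only now does non-splitting enter, to un-cross it: $\tp(\dee/A)=\tp(\dee'/A)$ plus non-splitting gives $\tp(\dee/A\abar\bbar)=\tp(\dee'/A\abar\bbar)$, whence $\tp(\bbar'\dee/A\abar)=\tp(\bbar\dee'/A\abar)=\tp(\bbar\dee/A\abar)$, and since $\dee\in M$ was arbitrary this is the conclusion --- no automorphism of $\monster$ stabilising $M$, and no infinite back-and-forth, is needed. Without this (or some equivalent) matching argument, your proof has a genuine gap at its central point.
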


To show that the hypotheses cannot be significantly weakened, consider a \qps\
$\monster$ where the language contains a single equivalence relation, and
$\monster$ has $\aleph_0$ equivalence classes, all of size $\aleph_0$. For $A
\subs \monster$, $\cl(A)$ is the union of the equivalence classes which meet
$A$. Then if $M \subs \monster$ is infinite but not closed, the conclusion
fails.

\begin{proof}[Proof of Proposition~\ref{isolation in closure}]
  By Proposition \ref{splittaus} there exists a finite $A\subset M$ such that
  $\tp(\aaa\bee/M)$ does not split over $A$. We may suppose (extending $A$)
  that $\aaa$ is $\cl$-independent from $M$ over $A$,
  and that $\bee\in\cl(A\aaa)$.
  We will show that $\tp(\bee/M\aaa)$ is s-isolated over $A\aaa$.

  Let $\bee'\in \monster$ with $\tp(\bee'/A\aaa)=\tp(\bee/A\aaa)$.
  Let $\dee\in M$.
  \begin{claim}
    There exists $\dee'\in M$ such that $\tp(\bee'\dee/A\aaa) =
    \tp(\bee\dee'/A\aaa)$.
  \end{claim}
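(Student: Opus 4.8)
My plan is to obtain $\dee'$ as the image of $\dee$ under an automorphism built in two stages. First I would reduce to the countable case: replacing $\monster$ by $\cl(M\aaa)$ — which contains $\bee,\bee',\dee$ (note $\bee\in\cl(A\aaa)$ by hypothesis, $\bee'\in\cl(A\aaa)$ by QM1, and $\dee\in M$), is closed in the old $\monster$, retains $M$ as a countable closed submodel, and changes none of the relevant quantifier-free types — I may assume $\monster$ is countable, so that Lemma~\ref{Galois} applies to it.

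Since $\tp(\bee'/A\aaa)=\tp(\bee/A\aaa)$, the finite tuples $A\aaa\bee'$ and $A\aaa\bee$ have the same quantifier-free type over $\emptyset$ and lie in the countable closed submodel $\cl(A\aaa)$, so Lemma~\ref{Galois} yields $f\in\Aut(\monster)$ fixing $A\aaa$ pointwise with $f(\bee')=\bee$. Put $\dee_1:=f(\dee)$, so $\tp(\bee\dee_1/A\aaa)=\tp(\bee'\dee/A\aaa)$; the trouble is that $\dee_1$ need not lie in $M$. Since $f$ fixes $A$, it fixes $\cl(A)$ setwise, so $\alpha:=f\restrict{\cl(A)}$ is an automorphism of the closed set $\cl(A)$ fixing $A$; applying Lemma~\ref{Galois} with $M$ as the ambient model extends $\alpha$ to $\hat\alpha\in\Aut(M)$ fixing $A$. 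I then set $\dee':=\hat\alpha(\dee)\in M$. As $\hat\alpha$ and $f$ agree on $\cl(A)$, both fix $\cl(A)$ setwise, and quantifier-free types of tuples from $M$ agree whether computed in $M$ or in $\monster$, a direct check gives $\tp(\dee'/\cl(A))=\tp(\dee_1/\cl(A))$.

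It then suffices to show $\tp(\dee'/A\aaa\bee)=\tp(\dee_1/A\aaa\bee)$, for this gives $\tp(\bee\dee'/A\aaa)=\tp(\bee\dee_1/A\aaa)=\tp(\bee'\dee/A\aaa)$ as wanted. Here the hypothesis on $\aaa$ is used. Since $\dee\in M$ we have $\cl(A\dee)\subseteq M$, while $\cl(A\aaa)\cap M=\cl(A)$ because $\aaa$ is $\cl$-independent from $M$ over $A$; hence $\dee$ — and likewise $\dee'\in M$ — is $\cl$-independent from $\cl(A\aaa)$ over $\cl(A)$, and applying $f$ (which fixes $A$ and $\aaa$) shows the same for $\dee_1=f(\dee)$. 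Now $\cl(A)\subseteq\cl(A\aaa)$ are $\cl$-closed and $\tp(\dee'/\cl(A))=\tp(\dee_1/\cl(A))$, so stationarity of types over $\cl$-closed sets — which in this setting follows from uniqueness of the generic type (QM4) together with the back-and-forth behind Lemma~\ref{Galois}, the $\cl$-independence ensuring we are comparing the unique free extensions — gives $\tp(\dee'/\cl(A\aaa))=\tp(\dee_1/\cl(A\aaa))$, whence $\tp(\dee'/A\aaa\bee)=\tp(\dee_1/A\aaa\bee)$ since $A\aaa\bee\subseteq\cl(A\aaa)$.

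I expect this last step to be the main obstacle. The cheap argument (iterating QM4 for the generic tuple $\aaa$) only upgrades $\tp(\dee'/\cl(A))=\tp(\dee_1/\cl(A))$ to equality over $\cl(A)\aaa$; to control the prescribed $\bee\in\cl(A\aaa)$ — and not merely some $A\aaa$-conjugate of it — one needs the equality over the full closure $\cl(A\aaa)$, i.e.\ genuine stationarity over $\cl$-closed sets. Everything preceding it is routine bookkeeping with $\aleph_0$-homogeneity and Lemma~\ref{Galois}.
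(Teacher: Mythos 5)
Your opening moves are unobjectionable: reducing to a countable $\monster$, producing $f\in\Aut(\monster/A\aaa)$ with $f(\bee')=\bee$ via Lemma~\ref{Galois}, and extending $f\restrict{\cl(A)}$ to $\hat\alpha\in\Aut(M)$ are all of a kind the paper itself performs with Lemma~\ref{Galois}. The genuine gap is exactly the step you flag as the main obstacle and then assert anyway: the ``stationarity of types over $\cl$-closed sets'' --- that $\tp(\dee'/\cl(A))=\tp(\dee_1/\cl(A))$ together with $\cl$-independence of both tuples from $\aaa$ over $\cl(A)$ yields $\tp(\dee'/\cl(A\aaa))=\tp(\dee_1/\cl(A\aaa))$ --- is not an axiom and does not follow from QM4 plus the back-and-forth behind Lemma~\ref{Galois}. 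What QM4 (iterated, using Lemma~\ref{Galois} to extend partial maps to closures at each step) gives is uniqueness of the type over a countable closed set of an \emph{independent} tuple, i.e.\ stationarity of the generic type. You need uniqueness of the free extension of the type of an \emph{arbitrary} tuple when the further tuple $\aaa$ and the prescribed element $\bee\in\cl(A\aaa)$ are adjoined. If you attempt this by back-and-forth you meet the same difficulty one level down: after matching a basis $\aaa_1$ of $\aaa$ over $\cl(A)$ via QM4, the remaining coordinates $\aaa_2$ and $\bee\in\cl(A\aaa_1)$ can be matched by QM5 or Lemma~\ref{homogeneity over emptyset} only to \emph{some} conjugates, and forcing them to be the prescribed ones while simultaneously fixing $\dee'$ is precisely the statement at issue. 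So your argument reduces the Claim to an unproved principle at least as strong as the Claim itself (indeed of the same nature as the s-isolation being established), and is circular at its decisive point.

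The paper's proof avoids ever prescribing $\dee'$ in advance, which is why it needs no stationarity. It writes $\dee=\dee_1\dee_2$ (its $\dee_1$ is not your $f(\dee)$) with $\dee_1$ independent over $\cl(A\aaa)$ and $\dee_2\in\cl(A\aaa\dee_1)$, and uses the arranged independence of $\aaa$ from $M$ over $A$ to see $\dee_2\in\cl(A\dee_1)$. Then genuine uniqueness of the generic type (QM4) gives $\tp(\bee'/A\aaa\dee_1)=\tp(\bee/A\aaa\dee_1)$ --- legitimate, because only the independent tuple $\dee_1$ is being adjoined; Lemma~\ref{homogeneity over emptyset} then supplies \emph{some} $\dee_2'$ with $\tp(\bee'\dee_2/A\aaa\dee_1)=\tp(\bee\dee_2'/A\aaa\dee_1)$; and QM1 forces $\dee_2'\in\cl(A\dee_1)\subseteq M$, so $\dee'=\dee_1\dee_2'$ works. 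Membership in $M$ is extracted from the type equality via QM1 rather than verified for a candidate chosen beforehand; that is the manoeuvre your version is missing, and without it (or a proof of your stationarity principle from QM1--QM5, which you do not give) the proposal does not establish the Claim.
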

  Assume the claim. Then $\tp(\dee/A)=\tp(\dee'/A)$, so by non-splitting
  $\tp(\dee/A\aaa\bee)=\tp(\dee'/A\aaa\bee)$. Hence
  $\tp(\bee\dee/A\aaa)=\tp(\bee\dee'/A\aaa)=\tp(\bee'\dee/A\aaa)$, so
  $\tp(\bee/A\dee\aaa) = \tp(\bee'/A\dee\aaa)$. So
  $\tp(\bee/M\aaa)=\tp(\bee'/M\aaa)$.

  It remains to prove the claim. Say $\dee=\dee_1\dee_2$ with $\dee_1$ an
  independent tuple over $\cl(A\aaa)$ and $\dee_2\in\cl(A\aaa\dee_1)$. By the
  independence of $\aaa$ from $M$ over $A$, in fact $\dee_2\in\cl(A\dee_1)$.
  Since $\bee\in\cl(A\aaa)$, by (QM4) we have
  $\tp(\bee'/A\aaa\dee_1) = \tp(\bee/A\aaa\dee_1)$.
  So by Lemma~\ref{homogeneity over emptyset}, there exists $\dee'_2\in\monster$
  such that $\tp(\bee'\dee_2/A\aaa\dee_1) = \tp(\bee\dee'_2/A\aaa\dee_1)$.
  But then $\dee'_2\in\cl(A\dee_1)\subs M$, so we conclude by setting
  $\dee' = \dee_1\dee'_2$.
\end{proof}

We remark that the conclusion of Proposition~\ref{isolation in closure}, or that
of Proposition~\ref{splittaus}, could replace $\aleph_0$-homogeneity over models
in the definition of a quasiminimal pregeometry structure:

\begin{cor}\label{wstabversions}
  Let $\monster$ be an $L$-structure for a countable language $L$, equipped with a
  pregeometry $\cl$. Suppose $\monster$ satisfies (QM1)-(QM4) and is
  $\aleph_0$-homogeneous over $\emptyset$, that is, the conclusion of Lemma~\ref{homogeneity over emptyset} holds.

  Then the following are equivalent:
  \begin{enumerate}[(a)]
    \item $\monster$ satisfies (QM5);
    \item If $M\closed \monster$ is countable and $\abar\in \monster$, then there is a
      finite set over which $\tp(\abar/M)$ doesn't split.
    \item If $M\closed \monster$ is countable and $\abar,\bbar\in \monster$ with
      $\bbar\in\cl(M\abar)$, then $\tp(\bbar/M\abar)$ is s-isolated.
  \end{enumerate}
\end{cor}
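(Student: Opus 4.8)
The goal is to prove the equivalence of three conditions (a)=(QM5), (b)=non-splitting over a finite set, and (c)=s-isolation in the closure, given that $\monster$ satisfies (QM1)--(QM4) and $\aleph_0$-homogeneity over $\emptyset$. Note that (QM2) is not assumed, so $\monster$ might be finite-dimensional; that shouldn't matter much since Propositions~\ref{splittaus} and \ref{isolation in closure} only use countable closed submodels and their closures.

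The plan is to prove (a)$\Rightarrow$(b)$\Rightarrow$(c)$\Rightarrow$(a). The implication (a)$\Rightarrow$(b) is exactly Proposition~\ref{splittaus}: inspecting its proof, the only axioms used are (QM1)--(QM5) via Lemma~\ref{Galois}, and Lemma~\ref{Galois} itself only needs the back-and-forth from (QM4)--(QM5) plus (QM3); all of this is available. Actually I should be a little careful: Proposition~\ref{splittaus} is stated for quasiminimal classes, but its proof is really about a single \qps\ $\monster$ (after replacing $\monster$ by $\cl(M\abar)$), so it applies here. The implication (b)$\Rightarrow$(c) is exactly the Proof of Proposition~\ref{isolation in closure}, which takes non-splitting over a finite set as its starting hypothesis and uses only (QM4) and Lemma~\ref{homogeneity over emptyset} (= $\aleph_0$-homogeneity over $\emptyset$) thereafter. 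So both of these implications are ``already proved'' in the sense that the given proofs go through verbatim under the weaker hypotheses, and I would just say so, pointing out which axioms each proof actually consumes.

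The real content is (c)$\Rightarrow$(a). Assume every type $\tp(\bbar/M\abar)$ with $\bbar\in\cl(M\abar)$ is s-isolated, and prove (QM5): given countable closed (or empty) $H$, $H'$ with $\tp(H)=\tp(H')$, finite tuples $\bbar$, $\bbar'$ with $\tp(H\bbar)=\tp(H'\bbar')$, and $a\in\cl(H\bbar)$, find $a'$ with $\tp(H\bbar a)=\tp(H'\bbar' a')$. The idea: by s-isolation, $\tp(a/H\bbar)$ is s-isolated over some finite $A_0\subs H\bbar$; write $A_0 = H_0\bbar$ with $H_0\subs H$ finite. Using (QM4) repeatedly (or the back-and-forth packaged in Lemma~\ref{Galois}, which here I may need to re-derive since Lemma~\ref{Galois} was stated for quasiminimal classes --- but its proof only uses (QM3),(QM4),(QM5), and under hypothesis (c) we are trying to prove (QM5), so I cannot invoke it freely; I should instead argue directly), I want to transport $a$ across. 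Concretely: by $\aleph_0$-homogeneity over $\emptyset$ applied to the tuples $H\bbar$ and $H'\bbar'$ I can find, for the finite subtuple $H_0\bbar a$, some $a'$ with $\tp(H_0\bbar a) = \tp(H_0'\bbar' a')$ --- wait, homogeneity over $\emptyset$ gives me an image of the \emph{whole} extending tuple only if the extending element is handled one at a time; since $a$ is a single element I can do $\tp(H_0\bbar\,a)=\tp(H'_0\bbar'\,a')$ directly. The remaining task is to upgrade $\tp(H_0\bbar a) = \tp(H'_0\bbar' a')$ to $\tp(H\bbar a) = \tp(H'\bbar' a')$. This is where s-isolation enters: $\tp(a/H\bbar)$ is s-isolated over $H_0\bbar$ inside $\cl(H\bbar) = \cl(H)$ (since $a\in\cl(H\bbar)$). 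Symmetrically one wants $a'\in\cl(H'\bbar')=\cl(H')$ and $\tp(a'/H'\bbar')$ s-isolated. The key point to verify is that s-isolation is ``preserved'' under the type-equality $\tp(H\bbar) = \tp(H'\bbar')$: if $\tp(a/H\bbar)$ is s-isolated over $H_0\bbar$, then the (unique up to the data) corresponding type over $H'\bbar'$ is s-isolated over $H'_0\bbar'$, and s-isolation plus matching the finite part $H_0\bbar$ forces matching the whole of $H\bbar$. I then run a back-and-forth between $\cl(H\bbar)=\cl(H)$ and $\cl(H'\bbar')=\cl(H')$ over the bases $H_0\bbar a$ and $H'_0\bbar' a'$, using s-isolation to guarantee that partial type-isomorphisms extend --- at each step adding one element $d$ of $H$ (resp.\ $H'$) and using that $\tp(d/H_0\bbar a\cup\text{stuff})$, being inside a closure of a countable model union a finite set, is again s-isolated, hence one can find a matching $d'$; the enumeration argument (as in the proof of \cite[Theorem~2.2]{OQMEC} / Lemma~\ref{Galois}) then yields $\tp(H\bbar a) = \tp(H'\bbar' a')$.

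The main obstacle is (c)$\Rightarrow$(a): setting up the back-and-forth so that s-isolation is exactly what makes partial isomorphisms extend, and in particular checking that the relevant types encountered during the back-and-forth always live inside the closure of a \emph{countable} closed submodel together with a \emph{finite} tuple (so that hypothesis (c) applies to them), and that s-isolation transfers across an existing type-equality of the parameter sets. There is also a bookkeeping subtlety: under hypothesis (c) we do not yet have (QM5), hence cannot cite Lemma~\ref{Galois} or Lemma~\ref{homogeneity over emptyset} directly, so the back-and-forth must be done by hand from (QM1)--(QM4), $\aleph_0$-homogeneity over $\emptyset$, and (c) --- essentially re-running the arguments of Section~3 with ``(QM5)'' replaced at each use by ``(c) plus $\aleph_0$-homogeneity over $\emptyset$.'' Once that is in place, the equivalence closes up: (a)$\Rightarrow$(b) by Proposition~\ref{splittaus}, (b)$\Rightarrow$(c) by the proof of Proposition~\ref{isolation in closure}, and (c)$\Rightarrow$(a) by the back-and-forth just sketched.
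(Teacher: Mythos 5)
Your (a)$\Rightarrow$(b) and (b)$\Rightarrow$(c) are exactly the paper's: it likewise cites Proposition~\ref{splittaus} and the proof of Proposition~\ref{isolation in closure}, and your accounting of which axioms those proofs consume is fine. (One small misreading: you say that under hypothesis (c) you ``cannot cite Lemma~\ref{homogeneity over emptyset}''; in fact its conclusion is an explicit hypothesis of the corollary, so $\aleph_0$-homogeneity over $\emptyset$ is freely available -- only Lemma~\ref{Galois} is off limits.)

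The genuine gap is in (c)$\Rightarrow$(a), which you yourself flag as ``the main obstacle'' without resolving it, and the mechanism you sketch would not work as described. First, QM5 asks for $\tp(H\bbar a)=\tp(H'\bbar' a')$ \emph{with respect to the given enumerations}, i.e.\ relative to the fixed partial isomorphism $\sigma:H\bbar\to H'\bbar'$; a back-and-forth in which, at each stage, you ``find a matching $d'$'' builds some other bijection between $\cl(H)$ and $\cl(H')$ and proves nothing about $\sigma$, since the image of each $d\in H$ is already prescribed to be $\sigma(d)$. Second, the types you propose to control at each stage, $\tp(d/H_0\bbar a\cup\text{finite stuff})$, are types over \emph{finite} sets: hypothesis (c) does not speak about these (its parameter set is a countable closed submodel union a finite tuple), and s-isolation over a finite set is vacuous, so the cited isolation does no work there. (Also, $\cl(H\bbar)=\cl(H)$ is false in general, though inessential.) The step you label ``the key point to verify'' -- that matching on the isolating finite set forces matching on all of $H\bbar$ -- is precisely the content to be proved, and the paper proves it by a one-step trick with no back-and-forth at all: having chosen $\cbar\in H$ finite with $\tp(a/H\bbar)$ s-isolated over $\cbar\bbar$ (enlarged so that $a\in\cl(\cbar\bbar)$) and $a'$ with $\tp(\cbar\bbar a)=\tp(\cbar'\bbar'a')$, one takes an arbitrary finite $\bar d\in H$, applies $\aleph_0$-homogeneity over $\emptyset$ \emph{in the reverse direction} to produce $a''$ with $\tp(\sigma(\bar d)\cbar'\bbar'a')=\tp(\bar d\cbar\bbar a'')$, notes $a''\in\cl(H\bbar)$ (via QM1), and then the s-isolation gives $\tp(\bar d\cbar\bbar a'')=\tp(\bar d\cbar\bbar a)$; since $\bar d$ was arbitrary, $\tp(H\bbar a)=\tp(H'\bbar'a')$. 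This reverse application of homogeneity, together with the membership check that lets the isolation apply, is the missing idea in your proposal.
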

\begin{proof}
  \providecommand{\dbar}{\bar{d}}
  Proposition~\ref{splittaus} gives (a) $\implies$ (b),
  and the proof of Proposition~\ref{isolation in closure} gives (b) $\implies$ (c).

  We show (c) $\implies$ (a). Let $H,\bbar,H',\bbar',a$ be as in (QM5), with
  $H$ and $H'$ closed in $\monster$ of dimension $\leq\aleph_0$. Write $\sigma : H\bbar
  \to H'\bbar'$ for the given isomorphism. By (c) applied to $H\closed \monster$,
  there exists $\cbar\in H$ such that $\tp(a/H\bbar)$ is
  isolated by $\tp(a/\cbar\bbar)$; let $\cbar':=\sigma(\cbar)$. By
  $\aleph_0$-homogeneity over $\emptyset$, there exists $a'$ such that
  $\tp(\cbar\bbar a) = \tp(\cbar'\bbar' a')$. Now suppose $\dbar\in H$, and
  let $\dbar':=\sigma(\dbar)$. By $\aleph_0$-homogeneity over $\emptyset$
  there exists $a''$ such that $\tp(\dbar'\cbar'\bbar' a') =
  \tp(\dbar\cbar\bbar a'')$, and by the isolation $\tp(\dbar\cbar\bbar a'') =
  \tp(\dbar\cbar\bbar a)$. So $\tp(H\bbar a) = \tp(H'\bbar' a')$ as required.
\end{proof}

\section{Excellence}\label{excellence section}
Shelah's notion of excellence says that types over certain configurations we
call \emph{crowns} are determined over finite sets, i.e. s-isolated. It will be convenient to use
notation for crowns which is borrowed from the notation used in simplicial
complexes, in particular with the use of a boundary operator $\bdy$.

Let $M$ be an infinite-dimensional model, let $B \subs M$ be an independent
subset of cardinality $\aleph_0$, write $M_B = \cl(B)$, and let
$b_1,\ldots,b_n \in B$ be distinct. We define $\bdy_i M_B = \cl (B \minus
\{b_i\})$ and the $n$-crown $\bdy M_B = \bigcup_{i=1}^n \bdy_i M_B$. Note that
$\bdy M_B$ depends on $n$ and the choice of $b_1,\ldots,b_n$, but we suppress
that from the notation.

\begin{defn}
  $M$ is \emph{excellent} if for every  $n \in \N$ with
  $n \geqslant 2$ and every $n$-crown $\bdy M_B$ in $M$, and every finite tuple
  $\abar \in M_B$, the type $\tp(\abar/\bdy M_B)$ is s-isolated.
\end{defn}

Note that the definition of crown here, and consequently the definition of
excellence, is a special case of the definition in \cite{OQMEC}. However, it is
exactly the special case which is used in the proofs in that paper.


\begin{prop}\label{excellence}
  For each $n\geq 2$, each $n$-crown $\bdy M_B$ and $\abar \in M_B$ we have
  \begin{enumerate}[i)]
    \item $\tp(\abar/\bdy M_B)$ is s-isolated, and
    \item If $\tp(\cbar/\bdy M_B) = \tp(\abar/\bdy M_B)$ then there is $\pi \in
      \Aut(M_B/\bdy M_B)$ such that $\pi(\abar) = \cbar$.
  \end{enumerate}
  In particular, $M$ is excellent.
\end{prop}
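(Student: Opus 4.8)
The proof proceeds by induction on $n \geq 2$, establishing (i) and (ii) simultaneously. The base case $n = 2$ is where the real work happens; the inductive step should be a routine "peeling off a vertex" argument. First I would set up notation: fix the basis $B$ with distinguished elements $b_1,\dots,b_n$, and write $B_i = B \setminus \{b_i\}$, so $\bdy_i M_B = \cl(B_i)$ and $\bdy M_B = \bigcup_i \cl(B_i)$. Since each $\cl(B_i)$ is a countable closed submodel of $M_B$, Propositions~\ref{splittaus} and~\ref{isolation in closure} apply to it. The strategy is classical: by non-splitting, a type over the big union $\bdy M_B$ is controlled by its restriction to a suitable countable piece, and one amalgamates the finitely many "corners" using $\aleph_0$-homogeneity.

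\textbf{Base case $n=2$.} Here $\bdy M_B = \cl(B_1) \cup \cl(B_2)$, with $\cl(B_1) \cap \cl(B_2) = \cl(B_1 \cap B_2) = \cl(B \setminus \{b_1,b_2\})$ (using the pregeometry and QM1; this intersection identity is the combinatorial heart and I would verify it carefully). Write $C_0 = \cl(B \setminus \{b_1, b_2\})$, $C_1 = \cl(B_1)$, $C_2 = \cl(B_2)$. Given $\abar \in M_B$, apply Proposition~\ref{splittaus} to the countable closed submodel $C_1 \closed M_B$ to get a finite $A_1 \subs C_1$ over which $\tp(\abar/C_1)$ does not split, and similarly a finite $A_2 \subs C_2$ for $\tp(\abar/C_2)$; enlarging, assume $A_1 \cap C_0 = A_2 \cap C_0 =: A_0$ and that $\abar$ is $\cl$-independent from each $C_i$ over $A_i$. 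Now suppose $\tp(\cbar/\bdy M_B) = \tp(\abar/\bdy M_B)$ — in fact it suffices to assume $\tp(\cbar/A_1 A_2) = \tp(\abar/A_1 A_2)$, which will give s-isolation over $A_1 A_2$. The plan is to build $\pi \in \Aut(M_B / \bdy M_B)$ with $\pi(\abar) = \cbar$ by a back-and-forth whose moves are justified by non-splitting over $A_1$ (for elements of $C_1$), over $A_2$ (for elements of $C_2$), and by Proposition~\ref{isolation in closure} together with QM4 / Lemma~\ref{homogeneity over emptyset} for elements of $M_B = \cl(B_1 \cup \{b_2\}) = \cl(C_1 \cup \{b_2\})$ that lie in the closure of the boundary together with $\abar$. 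Concretely: any element of $M_B$ lies in $\cl(C_1 b_2)$; using QM4 one transports it over $C_1 \abar$, and non-splitting over $A_1$ then matches the $\tp$ over all of $C_1$; the symmetric role of $C_2$ pins down the part over $C_2$; and the overlap $C_0$ is handled consistently because both non-splitting statements restrict compatibly to $A_0$. The amalgamation of the two halves is exactly the "PMOP for $n=2$" argument, and I would model it on the proof of Proposition~\ref{isolation in closure}, iterating the Claim there over both $C_1$ and $C_2$.

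\textbf{Inductive step.} Assume the result for $n-1$. Given an $n$-crown $\bdy M_B$ with vertices $b_1,\dots,b_n$, view $\bdy M_B = \bigl(\bigcup_{i=1}^{n-1}\cl(B_i)\bigr) \cup \cl(B_n)$, where $\bigcup_{i=1}^{n-1}\cl(B_i)$ is an $(n-1)$-crown inside the model $\cl(B \setminus \{b_n\})$-like configuration and $\cl(B_n) = \cl(B \setminus \{b_n\})$ is itself a model of which the $(n-1)$-crown is the boundary. The inductive hypothesis (i),(ii) gives s-isolation and automorphism-extension for types over the $(n-1)$-crown inside that smaller model. One then runs the same amalgamation scheme as in the base case, now with "$C_1$" replaced by the $(n-1)$-crown (handled by the inductive hypothesis) and "$C_2$" replaced by $\cl(B_n)$ (handled by Propositions~\ref{splittaus} and~\ref{isolation in closure}). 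Finally, "In particular, $M$ is excellent" is immediate: for any $n \geq 2$, any $n$-crown, and any finite $\abar$ in the corresponding $M_B$, part (i) gives that $\tp(\abar/\bdy M_B)$ is s-isolated, which is precisely the excellence condition.

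\textbf{Main obstacle.} The crux is the $n=2$ amalgamation: showing that a type over $\bdy M_B$ determined over the finite set $A_1 A_2$ on each of the two closed pieces can be coherently glued. The danger is a conflict over the overlap $C_0 = \cl(B \setminus \{b_1,b_2\})$ — one must check that the non-splitting data over $A_1$ and over $A_2$ genuinely agree on $C_0$, which is why one arranges $A_1 \cap C_0 = A_2 \cap C_0$ and invokes the intersection identity $C_1 \cap C_2 = C_0$. Getting the bookkeeping of independence and non-splitting exactly right so that the back-and-forth never gets stuck — and in particular controlling elements of $M_B$ not in $\bdy M_B$ via QM4 as in Proposition~\ref{isolation in closure} — is the technically delicate part; everything else is induction and the previously established propositions.
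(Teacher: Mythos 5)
There is a genuine gap, and it sits exactly at the point you yourself flag as the ``main obstacle'': the gluing of the two corners in the base case. Your data consist of non-splitting of $\tp(\abar/C_1)$ over a finite $A_1\subseteq C_1$ and of $\tp(\abar/C_2)$ over a finite $A_2\subseteq C_2$, plus agreement of $\abar$ and $\cbar$ over $A_1A_2$. But s-isolation over $A_1A_2$ requires controlling $\tp(\abar/\,\bar{d}_1\bar{d}_2 A_1A_2)$ for \emph{mixed} tuples $\bar{d}_1\in C_1\setminus C_2$, $\bar{d}_2\in C_2\setminus C_1$, and neither non-splitting statement says anything about such tuples: non-splitting over $A_1$ only compares tuples lying entirely in $C_1$, and similarly for $A_2$. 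You cannot appeal to Proposition~\ref{splittaus} for the union, since $C_1\cup C_2$ is not a closed submodel (its closure is all of $M_B$), and the back-and-forth you sketch (transport an element of $\cl(C_1 b_2)$ over $C_1\abar$ via QM4, then use non-splitting) never produces the required coherence between an already-matched piece of $C_1\setminus C_2$ and a new element of $C_2\setminus C_1$ over $\abar$. Making this direct $2$-amalgamation work is precisely the historically troublesome step (it is where the gaps in the original group-covers argument occurred), so asserting it as a routine back-and-forth is not acceptable; the same problem then infects your inductive step, which is modelled on the same scheme. A secondary gap: even granting (i), your part (ii) is only claimed via ``the same back-and-forth'', with no argument that a partial map fixing $\bdy M_B$ and sending $\abar\mapsto\cbar$ can actually be extended.

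The paper avoids this amalgamation entirely by a different mechanism, which is worth internalising. One picks a \emph{spare} basis element $b_0\in B\setminus\{b_1,\ldots,b_n\}$ with $\abar\in M'=\cl(B\setminus\{b_0\})$ (possible only because the model is infinite-dimensional), and an automorphism $\pi$ over $\cl(B\setminus\{b_0,b_n\})$ swapping $b_0$ and $b_n$. Proposition~\ref{isolation in closure} (for $n=2$) or the inductive hypothesis (for $n>2$) is applied to the \emph{joint} type of $\abar,\pi(\abar)$ over $b_1$ together with $\bigcup_{i=1}^{n-1}\bdy_i M$ --- a single closed-model-plus-finite-set (resp.\ single smaller-crown) configuration, so no gluing of two non-splitting bases is ever needed. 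Given $\cbar$ agreeing with $\abar$ over the isolating finite set, Lemma~\ref{Galois} or inductive part ii) yields $\sigma$ fixing $b_1,\pi(\abar),\bigcup_{i<n}\bdy_i M$ with $\sigma(\abar)=\cbar$, and the commutator $\eta=\sigma\pi^{-1}\sigma^{-1}\pi$ sends $\abar$ to $\cbar$ while fixing the smaller crown $\bdy M'$ pointwise. Finally an isomorphism $\phi\colon M'\to M$, fixing $\cl(B_0\cup\{b_1,\ldots,b_n\})$ pointwise and obtained by shifting the infinite basis, transports the equality of types over $\bdy M'$ to one over $\bdy M$, and conjugating $\eta$ by $\phi$ gives the automorphism needed for ii). If you want to salvage your outline, you would have to supply a genuine proof of the two-model amalgamation step, which is substantially harder than anything in Propositions~\ref{splittaus} and~\ref{isolation in closure}; the swap-commutator-shift argument is the way the paper sidesteps it.
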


\begin{proof}
  Any two $n$-crowns in $M$ are isomorphic, so we may fix $B$ and assume $M =
  M_B$. We proceed by induction on $n$. The proofs for the base case $n=2$ and
  the inductive step are very similar, so we do them together. Thus we suppose
  the result holds up to $n-1$ for some $n \geqslant 2$.

  Fix $b_1,\ldots,b_n \in B$, and let $\abar \in M$ be a finite tuple. Choose
  $b_0 \in B \minus\{b_1,\ldots,b_n\}$ such that $\abar \in \cl(B \minus
  \{b_0\})$ and let $M' = \cl(B \minus \{b_0\})$. Choose $\pi \in \Aut(M/\cl(B
  \minus\{b_0,b_n\}))$ such that $\pi(b_n) = b_0$ and $\pi(b_0)=b_n$.

  First, suppose $n=2$. Then $\abar, \pi(\abar) \in \cl(\bdy_1 M, b_1)$ so, by
  Proposition~\ref{isolation in closure}, $\tp(\abar, \pi(\abar) /\bdy_1 M,
  b_1)$ is s-isolated.

  Now suppose $n > 2$. Then $\Lambda := \bigcup_{i=1}^{n-1} \bdy_i M$ is an $(n-1)$-crown
  and $\cl(\Lambda) = M$ so, by part i) of the
  induction hypothesis, $\tp(\abar, \pi(\abar) / \Lambda)$
  is s-isolated. Note that in this case $b_1 \in \Lambda$,
  so $\tp(\abar, \pi(\abar) / b_1,\Lambda)$ is
  s-isolated. Thus (whatever $n$ is)  $\tp(\abar / b_1, \pi(\abar),
  \bigcup_{i=1}^{n-1} \bdy_i M)$ is s-isolated, say over $A_0$. Since
  $\pi(\abar)\subs \bdy_n M$, we have $A_0\subs \bdy M$.

  We proceed to show that $\tp(\abar/\bdy M)$ is s-isolated over
  $A_0$. So suppose $\cbar\in M$ and $\tp(\cbar/A_0) = \tp(\abar/A_0)$. Then we have $\tp(\cbar / b_1, \pi(\abar),
  \bigcup_{i=1}^{n-1} \bdy_i M) = \tp(\abar / b_1, \pi(\abar),
  \bigcup_{i=1}^{n-1} \bdy_i M)$, so by Lemma~\ref{Galois} (if $n=2$) or by part
  ii) of the inductive hypothesis (if $n>2$) there is $\sigma \in \Aut(M/b_1,
  \pi(\abar), \bigcup_{i=1}^{n-1} \bdy_i M)$ such that $\sigma(\abar) = \cbar$.
  Let $\eta$ be the commutator $\eta = \sigma \pi^{-1}\sigma^{-1}\pi$. Then
  since $\pi(\abar)$ is fixed by $\sigma^{-1}$, we have $\eta(\abar) =
  \sigma(\abar) = \cbar$. Now $\eta$ fixes $\bigcup_{i=1}^n \cl(B \minus
  \{b_0,b_i\}) = \bdy M'$ pointwise; indeed, for $i=1,\ldots,n-1$ we have $\cl(B
  \minus \{b_0,b_i\}) \subseteq \cl(B \minus \{b_i\})$, and the latter is fixed
  setwise by $\pi$ and pointwise by $\sigma$, while $\cl(B \minus \{b_0,b_n\})$
  is fixed pointwise by $\pi$ and setwise by $\sigma$. So $\tp(\cbar/\bdy M') =
  \tp(\abar/ \bdy M')$.

  Let $B_0$ be a finite subset of $B \minus\{b_0,b_1,\ldots,b_n\}$ such that
  $\abar \in \cl(B_0 \cup \{b_1,\ldots,b_n\})$. Then also $\cbar \in
  \cl(B_0\cup \{b_1,\ldots,b_n\})$, since $B_0\cup\{b_1,\ldots,b_n\} \subs
  \bigcup_{i=1}^{n-1}\bdy_i M \cup \{b_1\}$.

  Since $B$ is infinite, there is a bijection $B \minus (B_0 \cup
  \{b_0,b_1,\ldots,b_n\}) \to B\minus(B_0 \cup \{b_1,\ldots,b_n\})$ extending
  to an isomorphism $\phi: M' \to M$ which fixes $\cl(B_0 \cup
  \{b_1,\ldots,b_n\})$ pointwise.

  Now $\tp(\cbar/\bdy M') = \tp(\abar/\bdy M')$, so $\tp(\phi(\cbar)/\phi(\bdy
  M')) = \tp(\phi(\abar)/\phi(\bdy M'))$; but $\phi(\abar) = \abar$ and
  $\phi(\cbar) = \cbar$, and $\phi(\bdy M') = \bdy M$, so $\tp(\cbar/\bdy M) =
  \tp(\abar/\bdy M)$. Thus part i) is proved.

  For ii), suppose that $\tp(\cbar / \bdy M) = \tp(\abar/\bdy M)$, and so in
  particular $\tp(\cbar/A_0) = \tp(\abar/A_0)$. Let $\eta \in \Aut(M/\bdy M')$
  and $\phi: M'\to M$ be as above. Since $\eta$ fixes $B$ pointwise, $\eta|M'
  \in \Aut(M'/\bdy M')$. Defining $\theta = \phi \circ (\eta|M') \circ
  \phi^{-1}$ we have $\theta \in \Aut(M/\bdy M)$ with $\theta(\abar) = \cbar$,
  which proves ii).
\end{proof}

\section{Quasiminimal structures}\label{sec:qm}
An uncountable structure $M$ is {\em quasiminimal} if every first-order
$M$-definable subset of $M$ is countable or co-countable.
In this section, we treat the question of when a quasiminimal structure
is a quasiminimal pregeometry structure. Certainly some conditions are
required - for example, $\omega_1 \cross \Q$ equipped with the lexicographic
order has quantifier elimination and is quasiminimal, but the quasiminimal
closure ($cl_p$ defined below) does not satisfy exchange.

Based on the analyses of Zilber \cite{ZElemGST} and Pillay-Tanovi\'{c}
\cite{PiTaQM}, we are able to give simple ``natural'' criteria which, under the
assumption of quasiminimality, substitute for all the conditions of quasiminimal
pregeometry structures other than (QM5). For (QM5), we have no alternative
formulation in this context beyond those given in Corollary~\ref{wstabversions}.

So let $M$ be an uncountable quasiminimal structure in a countable language.
Suppose, extending the language if necessary, quantifier elimination for types
realised in $M$: if $\abar\in M$ and $\bbar\in M$ have the same
quantifier-free type, then $\abar$ and $\bbar$ have the same first-order type.

Let $p\in S_1(M)$ be the generic type, the type consisting precisely of the
co-countable formulas. For $A\subseteq M$, define $\cl_p(A) := \{ x\in M \;|\;
x \not\models p_A \}$. A weak Morley sequence in $p$ over $A\subseteq M$ is a
sequence $(a_1,\ldots)$ such that $a_i\in M$ and $\tp(a_i/Aa_{<i}) =
p_{Aa_{<i}}$, where $a_{<i} := \{ a_j \;|\; j<i \}$.

\begin{prop}
  \begin{enumerate}[(A)]
    \item
      $(M,\cl_p)$ is a quasiminimal pregeometry structure if
      \begin{enumerate}[(i)]
	\item $p$ does not split over $\emptyset$; i.e.\ if
	  $\tp(\bbar/\emptyset)=\tp(\bbar'/\emptyset)$ then for all $\phi$ we
	  have $\phi(x,\bbar)\in p$ iff $\phi(x,\bbar')\in p$, i.e.\
	  $|\phi(M,\bbar)| = |\phi(M,\bbar')|$;
	\item There is no $M$-definable partial order on $M$, defined over
	  finite $A\subseteq M$ say, for which weak Morley sequences in $p$
	  over $A$ are increasing.
	\item $(M,\cl_p)$ satisfies (QM5).
      \end{enumerate}

    \item
      Conversely, if $(M,\cl)$ is an uncountable quasiminimal pregeometry
      structure, then $M$ is a quasiminimal structure, $\cl=\cl_p$, and
      (i)-(iii) hold.
  \end{enumerate}
\end{prop}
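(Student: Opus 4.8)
The plan is to prove the two implications separately. In both directions the ``soft'' axioms among (QM1)--(QM5) come cheaply, and the entire content — and the only place where hypothesis (ii) is used — is the statement that the canonical closure operator $\cl_p$ is a pregeometry, i.e.\ that it satisfies exchange and idempotence.

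For part (A) I would dispatch (QM1), (QM3), (QM4) directly. (QM3) is immediate: for finite $A$ there are only countably many $A$-definable subsets of $M$, and $\cl_p(A)$ is the union of the countable ones. For (QM1), note that ``$a\in\cl_p(\bbar)$'' is equivalent to ``there is an $L$-formula $\phi(x,\ybar)$ over $\emptyset$ with $M\models\phi(a,\bbar)$ and $\neg\phi(x,\bbar)\in p$'': by quasiminimality $\phi(M,\bbar)$ is countable iff $\neg\phi(x,\bbar)\in p$, and by (i) the latter depends only on $\tp(\bbar)$; since quantifier-free types determine complete types here, (QM1) follows. For (QM4), an element of $M$ outside a closed set $H$ realizes precisely the generic restriction $p_H=p\restriction H$, and by (i) this restriction is determined by $\tp(H)$; hence $\tp(Ha)=\tp(H'a')$ whenever $\tp(H)=\tp(H')$. (QM5) is hypothesis (iii), and (QM2) is then automatic, since $M$ is uncountable while every finite-dimensional closed set is countable by (QM3).

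The heart of (A) is that, given (i), hypothesis (ii) makes $\cl_p$ a pregeometry. Monotonicity, $A\subseteq\cl_p(A)$ and finite character are immediate; I would obtain exchange and idempotence from the assertion that the generic type $p$ is generically stable — equivalently, that weak Morley sequences in $p$ over finite sets are totally indiscernible over them, equivalently that $p$ is $\emptyset$-definable. To get this from (i)+(ii), argue by contraposition. If exchange fails one may reduce (by finite character) to a finite base $A$ and generic elements with $a\models p_A$, $b\models p_{Aa}$ but $a\in\cl_p(Ab)$; form a weak Morley sequence $b_0=a, b_1=b, b_2,\dots$, which is $A$-indiscernible by (i) and $\aleph_0$-homogeneity, and fix a formula $\phi(x,\abar,y)$ (with $\abar$ enumerating $A$) witnessing $a\in\cl_p(Ab)$ via a countable fibre. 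By (i) every fibre $\phi(M,\abar,b_j)$ is countable, hence sits inside $\cl_p(Ab_j)$, and indiscernibility forces $\phi(b_i,\abar,b_j)\Leftrightarrow i<j$, so $b_i\in\cl_p(Ab_j)$ iff $i\le j$. From $\phi$ one now produces an $M$-definable strict partial order over $A$ along which all weak Morley sequences over $A$ are increasing — after extending the sequence to a dense index set and replacing $\phi$ so that the fibres become properly $\subseteq$-increasing, the relation ``$\phi(M,\abar,x)\subsetneq\phi(M,\abar,y)$'' is such an order (it is $M$-definable, transitive, irreflexive, and increasing along this sequence, hence along all weak Morley sequences over $A$, which have the same type over $A$). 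This contradicts (ii). Idempotence then follows because a $\emptyset$-definable $p$ turns ``$\phi(M,\abar,y)$ countable'' into a first-order condition, which closes the routine fibre-union computation showing $\cl_p(Ac)=\cl_p(A)$ whenever $c\in\cl_p(A)$. \textbf{I expect the main obstacle} to be exactly this manufacture of a genuine definable partial order from the failure of exchange (securing transitivity, not merely asymmetry, of the witnessing formula); this is the step resting on the analyses of \cite{ZElemGST} and \cite{PiTaQM}, and one could instead simply invoke from \cite{PiTaQM} that for $\emptyset$-invariant $p$ condition (ii) is equivalent to generic stability of $p$, and that generic stability makes $\cl_p$ a pregeometry.

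For the converse (B): first, $M$ is quasiminimal, because an $M$-definable set $X=\phi(M,\bbar)$ is either countable or contains some $a\notin\cl(\bbar)$ (as $\cl(\bbar)$ is countable by (QM3)), and in the latter case uniqueness of the generic type (QM4, with $H=H'=\cl(\bbar)$) gives every element of $M\setminus\cl(\bbar)$ the same type over $\cl(\bbar)\supseteq\bbar$ as $a$, so $X\supseteq M\setminus\cl(\bbar)$ is co-countable. Hence $p$ is a complete type and $\cl_p$ is defined. Next $\cl=\cl_p$: ``$\supseteq$'' because a countable $\bbar$-definable set lies in $\cl(\bbar)$ (same (QM4) argument), and ``$\subseteq$'' because if $x\in\cl(\abar)$ but $\tp(x/\abar)=p_{\abar}$ then $x$ has the same type over $\abar$ as a generic $y\in M\setminus\cl(\abar)$, whence $y\in\cl(\abar)$ by (QM1), a contradiction. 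Then (iii) is immediate; (i) follows from quasiminimality, (QM1), (QM4) and Lemma~\ref{homogeneity over emptyset} (a co-countable fibre over $\bbar$ transports to one over $\bbar'$ once $\tp(\bbar)=\tp(\bbar')$); and for (ii), since $\cl=\cl_p$ a weak Morley sequence over a finite $A$ is just a $\cl$-independent sequence over $A$, and any two such have the same type over $A$ (by $\aleph_0$-homogeneity, using that outside $\cl_p$ the type over $A$ must be generic), so if $<$ were an $M$-definable partial order over $A$ with every weak Morley sequence over $A$ increasing, then exchange lets us swap the first two terms of such a sequence $(a_i)$ to obtain another one, forcing $a_1<a_2$ and $a_2<a_1$ — absurd.
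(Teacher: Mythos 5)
Your proposal is correct and, at bottom, takes the same route as the paper. Part (B) is essentially identical: the paper shows quasiminimality and $\cl=\cl_p$ by observing that (QM1) and (QM4) give a complete type $q$ with $a\models q_B$ iff $a\notin\cl(B)$, which by (QM3) and uncountability must be $p$; it gets (i) by the same transport-of-a-generic-realisation argument via $\aleph_0$-homogeneity over $\emptyset$ and (QM1); and it disposes of (ii) with the one-line remark that permutations of weak Morley sequences are weak Morley sequences because $\cl_p=\cl$ is a pregeometry — your swap-the-first-two-terms argument is exactly that. For part (A), the paper's entire proof is: (QM1)--(QM3) are clear, (QM4) follows directly from (i), (QM5) is (iii), and the pregeometry axioms for $\cl_p$ are obtained by citing Theorem~4 of Pillay--Tanovi\'c, which is precisely the fallback you offer. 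The only divergence is your attempt to reprove that input by contraposition; as you yourself flag, the manufacture of a genuinely transitive definable partial order from a failure of exchange (the ``extend to a dense index set and replace $\phi$ so the fibres become properly increasing'' step), and likewise the idempotence of $\cl_p$, are not actually carried out, so that sketch should not be counted as a proof — but since you explicitly propose invoking the Pillay--Tanovi\'c result instead, your argument as a whole matches the paper's.
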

\begin{proof}
  \begin{enumerate}[(A)]
    \item
      By \cite{PiTaQM}[Theorem 4], (i) and (ii) imply that $\cl_p$ is indeed a
      pregeometry. Axioms (QM1-3) are clear. (QM4) follows directly from (i).
    \item
      By (QM1) and (QM4), there exists a complete type $q\in S_1(M)$ such
      that for $B\subseteq M$ finite and $a\in M$, we have $a\models q_B$ iff
      $a\notin\cl(B)$. By (QM3) and uncountability of $M$, a formula
      $\phi(x,\bbar) \notin q$ iff $\phi(M,\bbar)$ is countable. So $M$ is
      quasiminimal, $q=p$, and hence $\cl=\cl_p$.

      To prove (i), suppose $\phi(M,\bbar)\in p$ and
      $\tp(\bbar/\emptyset)=\tp(\bbar'/\emptyset)$. Then say
      $a\in\phi(M,\bbar)\setminus\cl(\bbar)$. By (QM5), there exists $a'\in M$
      such that $\tp(a,\bbar)=\tp(a',\bbar')$. By (QM1),
      $a'\notin\cl(\bbar')$. Hence $\phi(M,\bbar')\in p$.

      To prove (ii), note that permutations of weak Morley sequences are weak
      Morley sequences, since $\cl_p=\cl$ is a pregeometry.
  \end{enumerate}
\end{proof}

\begin{remark}
  Conditions (i) and (ii) in the preceeding Proposition could be replaced with
  the following conditions of a more elementary flavour, which appear in
  \cite{ZElemGST}:
  \begin{enumerate}[(i')]
    \item $\aleph_0$-homogeneity over $\emptyset$;
    \item
      ``Countability is weakly definable in $M$'': if $a\in\cl_p(\bbar)$, then
      there exists a formula $\phi(x,\ybar)$ over $\emptyset$ such that
      $M\models\phi(a,\bbar)$ and $|\phi(M,\bbar')| \leq \aleph_0$ for all
      $\bbar'\in M$;
    \item
      $|M|>\aleph_1$ or there is no definable partial order on $M$ with a
      chain in $M$ of order type $\omega_1$.
  \end{enumerate}

  Indeed, (i') and (ii') imply (i), since if then $\psi(M,\bbar)$ is countable
  then it is covered by countably many $\phi_i(M,\bbar)$ as in (ii'); by (i'),
  if $\tp(\bbar')=\tp(\bbar)$ then $\psi(M,\bbar')$ is countable since it is
  covered by the countably many countable $\phi_i(M,\bbar')$. By
  \cite{ZElemGST}[Lemma~3.0.3], (ii') and (iii') imply that $\cl_p$ is a
  pregeometry and hence that weak Morley sequences in $p$ are indiscernible,
  and hence that (ii) holds by \cite{PiTaQM}[Theorem 4];
  conversely, \cite{PiTaQM}[Theorem 4] shows under the assumption of (i) and
  (ii) that $p$ is $\emptyset$-definable, which implies (ii'), and that $\cl_p$
  satisfies exchange, which implies (iii').
\end{remark}

\bibliographystyle{alpha}
\bibliography{QMAndE}

\end{document}